\newtheorem{lemma}{Lemma}[section]
\newtheorem{prop}[lemma]{Proposition}
\newcommand\matC{{\mathbb{C}}}
\newcommand\Sigmatil{{\widetilde\Sigma}}
\newcommand\htil{{\widetilde{h}}}
\renewcommand{\hbar}{{\overline{h}}}
\newfont{\Got}{eufm10 scaled 1200}
\newcommand{\compo}{\,{\scriptstyle\circ}\,}
\newcommand{\id}{\mathrm{id}}
\newcommand\calD{{\mathcal D}}
\newcommand\calR{{\mathcal R}}
\newcommand\calG{{\mathcal G}}
\begin{document}

\title{Counting surface branched covers}

\author{Carlo~\textsc{Petronio}\thanks{Partially supported by INdAM through GNSAGA, by
MIUR through PRIN ``Real and Complex Manifolds: Geometry, Topology and Harmonic Analysis''
and by UniPI through PRA 2018/22 ``Geometria e Topologia delle variet\`a''}\and Filippo~\textsc{Sarti}}

\maketitle

\begin{abstract}\noindent
To a branched cover $f$ between orientable surfaces one can associate
a certain \emph{branch datum} $\calD(f)$, that encodes the combinatorics
of the cover. This $\calD(f)$ satisfies a compatibility
condition called the Riemann-Hurwitz relation. The old
but still partly unsolved Hurwitz problem asks whether for
a given abstract compatible branch datum $\calD$ there exists
a branched cover $f$ such that $\calD(f)=\calD$. One can actually
refine this problem and ask \emph{how many} these $f$'s exist,
but one must of course decide what restrictions one puts on such $f$'s,
and choose an equivalence relation up to which one regards them. And it turns out that quite a few natural choices
are possible. In this short note we carefully analyze all these
choices and show that the number of actually distinct ones is only three.
To see that these three choices are indeed different we employ
Grothendieck's \emph{dessins d'enfant}.

\smallskip

\noindent MSC (2010): 57M12.
\end{abstract}

\noindent
Quite some energy has been devoted in ancient and more recent times
to the computation of the so-called \emph{Hurwitz numbers}, namely
the numbers of equivalence classes of branched covers matching
given branch data, see for instance~\cite{Medn1, Medn2, GKL, KM1, KM2, KML, MSS, x1, x3}.
However there are several natural \emph{a priori} different
ways to define the Hurwitz numbers exactly. In fact, one can decide
to insist or not that a branched cover should respect the orientation,
and to require or not that the branching points should be some fixed marked ones.
Analogously, one can decide to include or not the orientation and/or the marking in the
notion of equivalence. In this paper we carefully
list and analyze all these possibilities, and we show that actually only
three of them are really distinct. Our results and techniques have a
rather elementary nature, but to the best of our knowledge no explicit account
exists in the literature of all the possible ways under which the
problem of counting surface branched covers could be faced.
Two of the three different notions of equivalence
however appear in~\cite{LZ}, see the end of Section~\ref{leq3:sec}
for a discussion on this point.

\section{Branched covers, branch data,\\ and the Hurwitz problem}
A surface branched cover is a
map $$f:\Sigmatil\to\Sigma$$
where $\Sigmatil$ and $\Sigma$ are closed and connected surfaces and $f$ is locally modeled
on maps of the form
$$(\matC,0)\ni z\mapsto z^m\in(\matC,0).$$
If $m>1$ the point
$0$ in the target $\matC$ is called a \emph{branching point},
and $m$ is called the local degree at the point $0$ in the source $\matC$.
There are finitely many branching points, removing which, together
with their preimages, one gets a genuine cover of some degree $d$.
If there are $n$ branching points and we order them in some arbitrary fashion,
the local degrees at the preimages of the $j$-th point form a partition $\pi_j$ of $d$,
and we define
$$\calD(f)=\big(\Sigmatil,\Sigma,d,n,\pi_1,\ldots,\pi_n\big)$$
to be \emph{a branch datum} associated to $f$ (the datum is not unique
because an order of the branching points has been chosen).
If $\pi_j$ has length $\ell_j$, the following Riemann-Hurwitz relation holds:
$$\chi\big(\Sigmatil\big)-(\ell_1+\ldots+\ell_n)=d\big(\chi\big(\Sigma\big)-n\big).$$
Let us now call \emph{abstract branch datum} a 5-tuple
$$\calD=\big(\Sigmatil,\Sigma,d,n,\pi_1,\ldots,\pi_n\big),$$
a priori not coming from any $f$,
and let us say it is \emph{compatible} if it satisfies the Riemann-Hurwitz relation.
(For a non-orientable $\Sigmatil$ and/or $\Sigma$ this relation
should actually be complemented with certain other necessary conditions,
but we restrict to an orientable $\Sigma$ in this paper, so we do not
spell out these conditions here.)

\paragraph{The Hurwitz problem}
The very old \emph{Hurwitz problem} asks which compatible abstract branch data $\calD$ are
\emph{realizable}, namely, such that there exists $f:\Sigmatil\to\Sigma$ with
$\calD(f)=\calD$ for some ordering of the branching points of $f$,
and which are \emph{exceptional} (non-realizable).
Several partial solutions
to this problem have been obtained over the time, and we quickly
mention here the fundamental~\cite{EKS}, the survey~\cite{Bologna}, and
the more recent~\cite{Pako, PaPe, PaPebis, CoPeZa, SongXu}.
In particular, for an orientable $\Sigma$ the problem has been shown
to have a positive solution whenever $\Sigma$ has positive genus.
When $\Sigma$ is the sphere $S$, many realizability and exceptionality
results have been obtained (some of experimental nature), but the general
pattern of what data are realizable remains elusive. One guiding
conjecture in this context is that \emph{a compatible branch datum is always
realizable if its degree is a prime number}. It was actually shown in~\cite{EKS}
that proving this conjecture in the special case of $3$ branching
points would imply the general case. This is why many efforts have
been devoted in recent years to investigating the realizability
of compatible branch data with base surface $\Sigma$ the sphere $S$ and having $n=3$
branching points. See in particular~\cite{PaPe, PaPebis} for some evidence
supporting the conjecture.

\section{As many as 12 counting methods}
In this section and in the next one,
$$\calD(f)=\big(\Sigmatil,\Sigma,d,n,\pi_1,\ldots,\pi_n\big)$$
will be a fixed but arbitrary abstract compatible branch datum.
Note that $\Sigmatil$ and $\Sigma$ are supposed to be specific surfaces,
they are not viewed up to any type of equivalence. Moreover we assume they
have a fixed orientation, and we also fix $n$ points $p_1,\ldots,p_n$ in $\Sigma$.
In the sequel we will call \emph{positive} a map that respects the orientation.

\medskip

We will say that a branched cover $f:\Sigmatil\to\Sigma$ is:
\begin{itemize}
\item A \emph{realization} of $\calD$ if $\calD(f)=\calD$
for some order of the branching points;
\item A \emph{positive realization} of $\calD$ if it is a
realization and away from the branching points $f$ is a positive
local homeomorphism;
\item A \emph{marked realization} if the branching points of
$f$ are $p_1,\ldots,p_n$ and the partition of $d$ given by the
local degrees of $f$ at the preimages of $p_j$ is $\pi_j$;
\item A \emph{marked positive realization} if it is both a
marked realization and a positive one.
\end{itemize}

Note that a marked realization of $\calD$ is a realization. We denote by
$$R(\calD)\qquad
R_+(\calD)\qquad
R_*(\calD)\qquad
R_{*,+}(\calD)$$
respectively the set of all the realizations of $\calD$,
the positive realizations, the marked realizations, the marked positive realizations.
For all $m,s$ and $\mu,\sigma$ where $m,\mu$ are empty or $*$, except that
$\mu$ must be empty if $m$ is, and $s,\sigma$ are empty or $+$, we define a quotient
$\calR_{m,s}^{\mu,\sigma}(\calD)$ of $R_{m,s}(\calD)$, where two realizations
$f,f':\Sigmatil\to\Sigma$ in $R_{m,s}$ are identified if there exists a
commutative diagram
\begin{equation}\label{equiv:diag}
\begin{tikzcd}
 \Sigmatil  \arrow{r}{\htil} \arrow{d}[left]{f} & \Sigmatil  \arrow{d}{f'}\\
  \Sigma \arrow{r}{h} & \Sigma
\end{tikzcd}
\end{equation}

\noindent with $h,\htil$ homeomorphisms, and:
\begin{itemize}
\item $h$ is the identity if $\mu=*$;
\item $h$ and $\htil$ are positive if $\sigma=+$.
\end{itemize}
So we have the quotients
\begin{equation*}
\begin{array}{llll}
\calR(\calD)        &   \calR^+(\calD)          &       \calR_+(\calD)        &   \calR_+^+(\calD)       \\
\calR_*(\calD)      &   \calR_*^+(\calD)        &       \calR_*^*(\calD)      &   \calR_*^{*,+}(\calD)   \\
\calR_{*,+}(\calD)  &   \calR_{*,+}^+(\calD)    &       \calR_{*,+}^*(\calD)  &   \calR_{*,+}^{*,+}(\calD).
\end{array}
\end{equation*}
From the definitions it immediately follows that the following maps are defined:
\begin{itemize}
 \item the quotient maps $q_{m,s}^\mu:\calR_{m,s}^{\mu,+}(\calD)\twoheadrightarrow \calR_{m,s}^\mu(\calD);$
 \item the quotient maps  $c_s^\sigma:\calR_{*,s}^{*,\sigma}(\calD)\twoheadrightarrow \calR_{*,s}^\sigma(\calD);$
 \item the inclusions  $j_m^{\mu,\sigma}:\calR_{m,+}^{\mu,\sigma}(\calD)\hookrightarrow \calR_m^{\mu,\sigma}(\calD);$
 \item the forgetful maps $o_s^\sigma:\calR_{*,s}^\sigma(\calD)\rightarrow \calR_s^\sigma(\calD).$
\end{itemize}
Therefore we get the following commutative diagram

\begin{equation*}
\begin{tikzcd}
 \calR_*^{*,+} \arrow[twoheadrightarrow]{rr}{c^+} \arrow[twoheadrightarrow]{dd}{q_*^*}\arrow[hookleftarrow]{dr}{j_*^{*,+}} & &  \calR_*^+ \arrow[twoheadrightarrow]{dd}[near start]{q_*}\arrow[hookleftarrow]{rd}{j_*^+}\arrow[rightarrow]{rr}{o^+}& & \calR^+\arrow[twoheadrightarrow]{dd}[near start]{q}\arrow[hookleftarrow]{rd}{j^+} &\\
& \calR_{*,+}^{*,+}\arrow[twoheadrightarrow,crossing over]{rr}[near start]{c_+^+} \arrow[twoheadrightarrow]{dd}[near start]{\hspace{0,1cm}q_{*,+}^*} & & \calR_{*,+}^+\arrow[twoheadrightarrow]{dd}[near start]{\hspace{0,1cm}q_{*,+}}\arrow[rightarrow, crossing over]{rr}[near start]{o_+^+}& & \calR_+^+\arrow[twoheadrightarrow]{dd}{q_+} \\
  \calR_*^* \arrow[twoheadrightarrow]{rr}[near start]{c}\arrow[hookleftarrow]{rd}{j_*^*}& & \calR_*\arrow[hookleftarrow]{dr}{j_*}\arrow[rightarrow]{rr}[near start]{o}& & \calR\arrow[hookleftarrow]{dr}{j} &  \\
&  \calR_{*,+}^*\arrow[from=uu,crossing over] \arrow[twoheadrightarrow]{rr}{c_+}& & \calR_{*,+}\arrow[from=uu,crossing over]\arrow[rightarrow]{rr}{o_+}& & \calR_+
\end{tikzcd}
\end{equation*}
where to save space we have omitted any explicit reference to $\calD$.

\section{At most three methods are distinct}\label{leq3:sec}
In this section we show that several of the maps
in the above commutative diagram
are actually bijections
or otherwise easily understood.

\begin{prop}\label{qstplusst:prop}
$q_{*,+}^*$ is a bijection.
\end{prop}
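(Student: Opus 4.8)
The plan is to observe that $\calR_{*,+}^{*,+}(\calD)$ and $\calR_{*,+}^{*}(\calD)$ are two quotients of the \emph{same} set $R_{*,+}(\calD)$ of marked positive realizations, and that $q_{*,+}^{*}$ is the tautological map sending the class of $f$ under the finer relation to its class under the coarser one. Being a quotient of quotients, this map is surjective by construction, so it suffices to prove injectivity, namely that the two equivalence relations actually coincide. Since $\mu=*$ forces $h=\id$ in both relations, the commutativity of diagram~\eqref{equiv:diag} reduces in each case to the single condition $f=f'\circ\htil$; the only difference is that for $\calR_{*,+}^{*,+}$ one demands $\htil$ to be positive, whereas for $\calR_{*,+}^{*}$ one allows $\htil$ to be an arbitrary homeomorphism. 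Hence everything boils down to the following assertion: \emph{if $f,f'\in R_{*,+}(\calD)$ and $f=f'\circ\htil$ for some homeomorphism $\htil\colon\Sigmatil\to\Sigmatil$, then $\htil$ is automatically positive.}

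To prove this I would argue locally, using that a homeomorphism of the connected oriented surface $\Sigmatil$ is either everywhere orientation-preserving or everywhere orientation-reversing; it is therefore enough to determine the behaviour of $\htil$ near a single well-chosen point. I would pick $x\in\Sigmatil$ that is not a preimage of any branching point under $f$, so that $f$ is a positive local homeomorphism near $x$. Because $f$ and $f'$ are \emph{marked} realizations they share the branching set $\{p_1,\dots,p_n\}$, and from $f(x)=f'(\htil(x))$ together with $f(x)\notin\{p_1,\dots,p_n\}$ one deduces that $\htil(x)$ is not a preimage of a branching point under $f'$; thus $f'$ is a positive local homeomorphism near $\htil(x)$ as well.

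On a neighbourhood of $x$ we may then write $\htil=(f')^{-1}\circ f$, where $(f')^{-1}$ is the local inverse of $f'$ near $f(x)=f'(\htil(x))$. Both $f$ and this local inverse are orientation-preserving, so their composite $\htil$ is orientation-preserving near $x$, and by the dichotomy above $\htil$ is positive on all of $\Sigmatil$. This forces the witnessing homeomorphism to be positive, so the two equivalence relations coincide and $q_{*,+}^{*}$ is a bijection. The only step that requires a little care — and the place where the marking hypothesis is genuinely used — is the choice of the generic point $x$ guaranteeing that $f'$ is a local homeomorphism at $\htil(x)$; once this is secured the argument is entirely elementary, and I do not expect any serious obstacle.
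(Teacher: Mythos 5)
Your proof is correct and follows essentially the same route as the paper's: both note that surjectivity is automatic and reduce injectivity to the observation that, since $h=\id$ forces $f=f'\circ\htil$ and both $f,f'$ are positive, the homeomorphism $\htil$ must itself be positive. The paper asserts this last point in a single sentence, and you simply spell out the (correct) local argument behind it — writing $\htil=(f')^{-1}\circ f$ near a generic point and invoking the fact that a homeomorphism of a connected orientable surface is either everywhere positive or everywhere negative.
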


\begin{proof}
If $f,f'\in R_{*,+}$ are equivalent in $\calR_{*,+}^*(\calD)$ then there
exists a commutative diagram as~(\ref{equiv:diag}) with $h$ the identity.  But
$f$ and $f'$ are positive, so $\htil$ also is, whence $f$ and $f'$ are
equivalent in $\calR_{*,+}^{*,+}(\calD)$.
\end{proof}

\begin{prop}\label{j:bijection}
$j,\ j_*$ and $j_*^*$ are bijections.
\end{prop}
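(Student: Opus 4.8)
The plan is to handle the three maps uniformly, separating the trivial injectivity from the real content, which is surjectivity. Each of $j$, $j_*$, $j_*^*$ is induced by an inclusion of the underlying sets of realizations, namely $R_+\subseteq R$ for $j$ and $R_{*,+}\subseteq R_*$ for $j_*$ and $j_*^*$; moreover in every case the equivalence relation defining the target quotient restricts verbatim to the one defining the source, since the conditions imposed on $h$ and $\htil$ (none for $j$ and $j_*$, and $h=\id$ for $j_*^*$) are literally the same on both sides. Injectivity is therefore immediate: a single commutative square of the form~(\ref{equiv:diag}) between two positive realizations that witnesses their equivalence in the target already witnesses it in the source.

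For surjectivity the key idea is to \emph{precompose at the source} with an orientation-reversing self-homeomorphism, rather than postcompose at the target. First I would fix an orientation-reversing homeomorphism $\htil_0\colon\Sigmatil\to\Sigmatil$. Given any realization (or marked realization) $g$ of $\calD$, if $g$ is already positive there is nothing to prove; otherwise $g$ reverses orientation and I set $f=g\compo\htil_0$. The crucial observation is that precomposing with a self-homeomorphism of $\Sigmatil$ alters neither the image branching points nor the local degrees at their preimages, so $f$ has exactly the same branch datum as $g$, and in particular $f$ is marked whenever $g$ is. Since both $g$ and $\htil_0$ reverse orientation, their composite $f$ is positive, so $f\in R_+$ (resp. $R_{*,+}$). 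Finally, the commutative square whose horizontal arrows are $\htil_0$ on top and $\id$ on the bottom, and whose vertical arrows are $f$ on the left and $g$ on the right, commutes by the very definition of $f$; because its base map is the identity it realizes the equivalence of $f$ and $g$ under the most restrictive of our relations, hence in $\calR_*^*$, $\calR_*$, and $\calR$ simultaneously. Letting $g$ range over $R_*$ yields surjectivity of $j_*^*$ and $j_*$, and letting $g$ range over $R$ yields surjectivity of $j$.

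The only external input on which the whole argument rests — and the only place where anything genuinely needs checking — is the existence of the orientation-reversing self-homeomorphism $\htil_0$ of $\Sigmatil$, so this is where I expect the single (mild) obstacle to sit. It is resolved by the classical fact that every closed orientable surface admits one: for the sphere one may take a reflection or the antipodal map, while for positive genus one realizes $\Sigmatil$ as a surface in $\matR^3$ invariant under a reflection of the ambient space, whose restriction reverses orientation. Since $\Sigmatil$ and $\Sigma$ carry fixed orientations throughout our setup, such an $\htil_0$ is always available, and the proof then goes through identically for all three maps.
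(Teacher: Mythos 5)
Your proof is correct, and it shares with the paper's argument the one essential ingredient: a fixed orientation-reversing self-homeomorphism of $\Sigmatil$ (the paper fixes a negative \emph{involutive} automorphism $\rho$), used to turn a non-positive realization $g$ into the positive realization $g\compo\rho$; like you, the paper also relies tacitly on the fact that a realization which is not positive reverses orientation globally, which holds because the complement of the branching preimages in $\Sigmatil$ is connected. Where you genuinely diverge is in the logical packaging. The paper constructs an explicit inverse $\varphi$ of each $j$, defined on equivalence classes by $[f]\mapsto[f]$ or $[f]\mapsto[f\compo\rho]$ according to whether $f$ is positive, and therefore must verify that $\varphi$ is well defined --- a four-case diagram chase that occupies most of the proof --- before checking that it is a two-sided inverse. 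You instead observe that each of $j$, $j_*$, $j_*^*$ is induced by an inclusion of sets of realizations for which the target equivalence relation restricts verbatim to the source one, so injectivity is automatic, and you prove surjectivity by exhibiting, for each $g$, a single commutative square (top arrow $\htil_0$, bottom arrow $\id$) showing that $g$ is equivalent to $g\compo\htil_0$ even under the most restrictive of the relations. That square is precisely the one the paper uses in its ``right inverse'' step, but your decomposition into injectivity plus surjectivity makes the well-definedness verification unnecessary and shows, in passing, that involutivity of the reversing homeomorphism plays no role. Both arguments ultimately rest on the same classical fact, which you correctly flag and justify: every closed orientable surface admits an orientation-reversing self-homeomorphism.
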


\begin{proof}
We spell out the argument for $j$, the other two cases are identical.
Fix a negative involutive automorphism $\rho$ of $\Sigmatil$. Our aim
is to define an inverse $\varphi:\calR(\calD)\to\calR_+(\calD)$ of $j$.
To do so for $f\in R(\calD)$ we set
$$\varphi([f])=\begin{cases}
             [f] & \text{ if }f\in R_+(\calD) \\
             [f\compo\rho] & \text{ if }f\not\in R_+(\calD)
             \end{cases}$$
and we prove the following:

\medskip\noindent\textsc{Fact: $\varphi$ is well-defined}\quad
Note first that $f\compo\rho\in R_+(\calD)$ if $f\not\in R_+(\calD)$, so
it makes sense to take $[f\compo\rho]\in\calR_+(\calD)$. Assume now $f,f'\in R(\calD)$
are equivalent in $\calR(\calD)$, so there exists a commutative diagram as~(\ref{equiv:diag}).
We have four cases:
\begin{itemize}
\item If $f,f'\in R_+(\calD)$ the same diagram shows that $f$ and $f'$ are equivalent in $\calR_+(\calD)$;
\item If $f\in R_+(\calD)$ and $f'\not\in R_+(\calD)$ then $f$ and $f'\compo\rho$ are equivalent in $\calR_+(\calD)$
because we have the commutative diagram
\begin{center}
\begin{tikzcd}
 \Sigmatil  \arrow{r}{\rho\compo\htil} \arrow{d}{f} & \Sigmatil  \arrow{d}{f'\compo \rho}\\
  \Sigma \arrow{r}{h} & \Sigma
\end{tikzcd}
\end{center}
(recall that $\rho$ is involutive, so $\rho^2$ is the identity of $\Sigmatil$);
\item If $f\not\in R_+(\calD)$ and $f'\in R_+(\calD)$ then $f\compo\rho$ and $f'$ are equivalent in $\calR_+(\calD)$
because we have the commutative diagram
\begin{center}
\begin{tikzcd}
 \Sigmatil  \arrow{r}{\htil \compo \rho} \arrow{d}{f\compo \rho} & \Sigmatil  \arrow{d}{f'}\\
  \Sigma \arrow{r}{h} & \Sigma;
\end{tikzcd}
\end{center}
\item If $f,f'\not\in R_+(\calD)$ then $f\compo\rho$ and $f'\compo\rho$ are equivalent
in $\calR_+(\calD)$
because we have the commutative diagram
\begin{equation}\label{rhohtilrho:diag}
\begin{tikzcd}
\Sigmatil  \arrow{rr}{\rho\compo\htil \compo \rho} \arrow{dd}{f\compo \rho} & & \Sigmatil  \arrow{dd}{f'\compo \rho}\\
& & \\
  \Sigma \arrow{rr}{h} & & \Sigma.
\end{tikzcd}
\end{equation}

\end{itemize}

\medskip\noindent\textsc{Fact: $\varphi$ is a left inverse of $j$}\quad
This is immediate: for $f\in R_+(\calD)$ we have $j([f])=[f]\in \calR(\calD)$,
whence $(\varphi\compo j)([f])=[f]\in\calR_+(\calD)$.

\medskip\noindent\textsc{Fact: $\varphi$ is a right inverse of $j$}\quad
Given $f\in R(\calD)$ we have two cases; if $f\in R_+(\calD)$ then $\varphi([f])=[f]\in\calR_+(\calD)$,
so $(j\compo\varphi)([f])=[f]\in\calR(\calD)$; if $f\not\in R_+(\calD)$ then
$\varphi([f])=[f\compo\rho]\in\calR_+(\calD)$,
so $(j\compo\varphi)([f])=[f\compo\rho]\in\calR(\calD)$, but we actually have that $f$ and $f\compo\rho$
are equivalent in $\calR(\calD)$, because we have the commutative diagram
\begin{center}
 \begin{tikzcd}
 \Sigmatil  \arrow{r}{\rho} \arrow{d}{f\compo \rho} & \Sigmatil  \arrow{d}{f}\\
  \Sigma \arrow{r}{\id} & \Sigma.
\end{tikzcd}
\end{center}
The proof is complete.
\end{proof}

\begin{prop}
$o,\ o_+,\ o^+$ and $o_+^+$ are bijections.
\end{prop}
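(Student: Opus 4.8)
The plan is to treat all four maps $o$, $o_+$, $o^+$, $o_+^+$ simultaneously as the maps $o_s^\sigma\colon\calR_{*,s}^\sigma(\calD)\to\calR_s^\sigma(\calD)$ induced by the inclusion $R_{*,s}\hookrightarrow R_s$ (every marked realization is in particular a realization). The key preliminary observation I would record is that the two relevant equivalence relations agree on $R_{*,s}$: both $\calR_{*,s}^\sigma$ and $\calR_s^\sigma$ identify $f$ with $f'$ exactly when there is a commutative diagram as~(\ref{equiv:diag}) with $h,\htil$ homeomorphisms that are positive when $\sigma=+$, and in neither case is $h$ required to be the identity, since the decorating lower index $\mu$ is empty throughout. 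This makes each $o_s^\sigma$ well defined.

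Injectivity is then essentially formal. If two marked realizations $f,f'\in R_{*,s}$ satisfy $o_s^\sigma([f])=o_s^\sigma([f'])$, i.e.\ they are identified in $\calR_s^\sigma$, then by definition there is a diagram~(\ref{equiv:diag}) of the required type; but the very same diagram identifies $f$ and $f'$ in $\calR_{*,s}^\sigma$, so $[f]=[f']$ already upstairs. Phrased abstractly, the equivalence defining $\calR_{*,s}^\sigma$ is exactly the restriction to $R_{*,s}$ of the one defining $\calR_s^\sigma$, so the induced map on quotients is injective.

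For surjectivity I would start from an arbitrary $f\in R_s$ and order its branching points as $q_1,\ldots,q_n$ so that the partition of the local degrees over $q_j$ is $\pi_j$. I would then choose an orientation-preserving homeomorphism $h\colon\Sigma\to\Sigma$ with $h(q_j)=p_j$ for every $j$, and set $g=h\compo f$. Composing with a homeomorphism of the target does not change the local model $z\mapsto z^m$ nor the fibers, since $(h\compo f)^{-1}(p_j)=f^{-1}(q_j)$; hence $g$ is again a branched cover, its branching points are precisely $p_1,\ldots,p_n$, and the partition over $p_j$ is $\pi_j$, so $g\in R_{*,s}$ (and $g$ is positive whenever $f$ is, because $h$ is). Finally the diagram~(\ref{equiv:diag}) with this $h$ and $\htil=\id$ commutes, since $g=h\compo f$, and shows $g\sim f$ in $\calR_s^\sigma$; thus $o_s^\sigma([g])=[f]$. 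Because we took $h$ orientation-preserving, this diagram is admissible also when $\sigma=+$, so the single construction covers all four cases at once.

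The only genuinely non-formal ingredient, and hence the main point to justify, is the homogeneity statement used above: a closed connected orientable surface admits an orientation-preserving self-homeomorphism carrying any ordered $n$-tuple of distinct points to any prescribed ordered $n$-tuple of distinct points. I would invoke this as a standard fact, noting that it follows from the connectedness of the configuration space of $n$ ordered points on a connected surface together with the isotopy-extension theorem. Everything else—well-definedness, injectivity, and the verification that $h\compo f$ remains a valid marked (and, when needed, positive) branched cover—is routine.
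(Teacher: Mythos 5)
Your proof is correct, and it is organized differently from the paper's. The paper constructs an explicit inverse $\varphi\colon\calR_s^\sigma(\calD)\to\calR_{*,s}^\sigma(\calD)$, $[f]\mapsto[g\compo f]$ (with $g$ an automorphism of $\Sigma$ carrying the branching points to the $p_j$'s), and then spends most of the argument verifying that $\varphi$ is well defined --- independent of the choice of $g$ and of the representative $f$ --- before checking it is a two-sided inverse of $o_s^\sigma$. You instead split the statement into injectivity and surjectivity: your injectivity argument rests on the observation that the equivalence relation defining $\calR_{m,s}^{\mu,\sigma}$ depends only on the superscripts $\mu,\sigma$, so the relation on $R_{*,s}$ defining $\calR_{*,s}^\sigma$ is literally the restriction of the one on $R_s$ defining $\calR_s^\sigma$, making injectivity of the induced map on quotients purely formal; your surjectivity argument then uses the same geometric ingredient as the paper (composition with a positive automorphism $h$ of $\Sigma$ taking the branching points $q_j$ to the marked points $p_j$, together with the diagram having $\htil=\id$). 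What your route buys is the complete elimination of the well-definedness verification, which is the bulkiest part of the paper's proof, and a genuinely uniform treatment of all four maps; what the paper's route buys is an explicit description of the inverse map, which makes the identification between the marked and unmarked quotients concrete. Both arguments ultimately hinge on the same nonformal fact, which you correctly isolate and justify: the group of orientation-preserving homeomorphisms of a closed connected orientable surface acts transitively on ordered $n$-tuples of distinct points (the paper invokes this silently when it ``chooses'' the automorphism $g$ and later remarks it ``can always be chosen to be positive'').
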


\begin{proof}
We begin with $o$ and we construct its inverse $\varphi$.
Given $f\in R(\calD)$, suppose that
$f$ has branching points $x_1,\ldots,x_n$ ordered so that
the local degrees of $f$ over $x_j$ form the partition $\pi_j$ of $d$.
We then choose an automorphism $g$ of $\Sigma$ such that $g(x_j)=p_j$ for all $j$
and set $\varphi([f]=[g\compo f]\in\calR_*(\calD)$. We have the following:

\medskip\noindent\textsc{Fact: $\varphi$ is well-defined}\quad
Note first that indeed $g\compo f\in R_*(\calD)$. Moreover
$[g\compo f]\in\calR_*(\calD)$ is independent of $g$, because if
$g'$ is another automorphism of $\Sigma$ such that $g'(x_j)=p_j$ for all $j$
we have the commutative diagram
\begin{center}
\begin{tikzcd}
\Sigmatil  \arrow{r}{\id} \arrow{d}[left]{g\compo f} & \Sigmatil  \arrow{d}{g'\compo f}\\
\Sigma \arrow{r}{g'\compo g^{-1}} & \Sigma.
\end{tikzcd}
\end{center}
Suppose now that $f,f'\in R(\calD)$ are equivalent in $\calR(\calD)$, so there exists a
commutative diagram as~(\ref{equiv:diag}); let $x_1,\ldots,x_n$ and $x'_1,\ldots,x'_n$
be the branching points of $f$ and $f'$ ordered so that
the local degrees of $f$ over $x_j$ and those of $f'$ over $x'_j$
form the partition $\pi_j$ of $d$. Take automorphisms
$g$ and $g'$ of $\Sigma$ such that $g(x_j)=g'(x'_j)=p_j$ for all $j$.
Then $g\compo f$ and $g'\compo f'$ are equivalent in $\calR_*(\calD)$ because
we have the commutative diagram
\begin{center}
 \begin{tikzcd}
 \Sigmatil  \arrow{rr}{\htil} \arrow{dd}[left]{g\compo f} & &\Sigmatil  \arrow{dd}{g'\compo f'}\\
 & & \\
  \Sigma \arrow{rr}{g'\compo h\compo g^{-1}} & &\Sigma.
\end{tikzcd}
\end{center}

\medskip\noindent\textsc{Fact: $\varphi$ is a left inverse of $o$}\quad
Given $f\in R_*(\calD)$ we know that the branching points of
$f$ are $p_1,\ldots,p_n$ with associated partitions $\pi_1,\ldots,\pi_n$,
so in the definition of $\varphi$ we can take $g=\id$ and it readily follows that
$$(\varphi\compo o)([f])=\varphi(o([f]))=\varphi([f])=[\id\compo f]=[f].$$

\medskip\noindent\textsc{Fact: $\varphi$ is a right inverse of $o$}\quad
Take $f\in R(\calD)$ and suppose that $\varphi([f])$ is defined as $[g\compo f]$.
Then $$(o\compo\varphi)[f]=o(\varphi([f]))=o([g\compo f])=[g\compo f]$$
but $g\compo f$ is equivalent to $f$ in $\calR(\calD)$ because
we have the commutative diagram
\begin{center}
 \begin{tikzcd}
 \Sigmatil  \arrow{r}{\id} \arrow{d}[left]{ f} & \Sigmatil  \arrow{d}{g\compo f}\\
  \Sigma \arrow{r}{g} &\Sigma.
\end{tikzcd}
\end{center}

\medskip

This concludes the argument for $o$. Repeating it for $o_+$ only requires
to remark that the automorphism $g$ of $\Sigma$ used to define $\varphi(f)$
can always be chosen to be positive. The extension to $o^+$ and $o^+_+$
is straight-forward.
\end{proof}

\begin{prop}
There exist natural $2:1$ projections
$$\calR^+(\calD)\to\calR_+^+(\calD)\qquad \calR_*^{*,+}(\calD)\to\calR_{*,+}^{*,+}(\calD)$$
having $j^+$ and $j_+^{*,+}$ as sections, so
$\calR^+(\calD)$ can be identified to two copies of $\calR_+^+(\calD)$, and
$\calR_*^{*,+}(\calD)$ can be identified to two copies of $\calR_{*,+}^{*,+}(\calD)$.
\end{prop}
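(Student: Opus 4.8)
The plan is to reuse the negative involutive automorphism $\rho$ of $\Sigmatil$ fixed in the proof of Proposition~\ref{j:bijection} and to define the projection $\pi:\calR^+(\calD)\to\calR_+^+(\calD)$ by the same formula as the map $\varphi$ used there, namely $\pi([f])=[f]$ if $f\in R_+(\calD)$ and $\pi([f])=[f\compo\rho]$ if $f\notin R_+(\calD)$ (note that $f\compo\rho$ is positive in the latter case, so $[f\compo\rho]\in\calR_+^+(\calD)$ makes sense). First I would check that $\pi$ is well defined. The verification that equivalent $f,f'$ have equal images splits into the same four cases as in Proposition~\ref{j:bijection}, and in each case the required equivalence diagram is obtained by conjugating $\htil$ by $\rho$, exactly as in diagram~(\ref{rhohtilrho:diag}); the only new point is that here the equivalences must be positive, which is automatic since $\rho\compo\htil\compo\rho$ is positive whenever $\htil$ is. I would also record that the class $[f\compo\rho]$ is independent of the chosen $\rho$: if $\rho'$ is another negative involution then $f\compo\rho=(f\compo\rho')\compo(\rho'\compo\rho)$ with $\rho'\compo\rho$ positive, so the two classes agree in $\calR_+^+(\calD)$. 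This independence is what makes $\pi$ natural.

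The heart of the argument is to see that $\pi$ is exactly $2:1$ and that $j^+$ is a section. That $\pi\compo j^+=\id$ is immediate from the formula, since $j^+$ sends the class of a positive $f$ to the positive class $[f]$, on which $\pi$ acts as the identity. To count fibres, I would observe that a positive equivalence carries a positive realization to a positive one and a negative realization to a negative one (if $h,\htil$ are positive and $h\compo f=f'\compo\htil$, then $f'=h\compo f\compo\htil^{-1}$ has the same orientation behaviour as $f$); hence ``being positive'' is a well-defined property of a class in $\calR^+(\calD)$, and it sorts the preimages of a given $[g]\in\calR_+^+(\calD)$ into the positive class $[g]$ and the negative class $[g\compo\rho]$. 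These two classes are always \emph{distinct} in $\calR^+(\calD)$, precisely because no positive equivalence can relate a positive to a negative realization, so each fibre has exactly two elements. This is the step I expect to be the main, if modest, obstacle: it is where the passage from the coarser equivalence of Proposition~\ref{j:bijection} (under which $j$ was a bijection) to the finer positive equivalence makes the map genuinely $2:1$ rather than $1:1$, and one must check that the two candidate preimages neither collapse together nor split further.

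The second projection $\calR_*^{*,+}(\calD)\to\calR_{*,+}^{*,+}(\calD)$ is handled by the identical construction $[f]\mapsto[f]$ or $[f\compo\rho]$, with $\calR_{*,+}^{*,+}(\calD)\hookrightarrow\calR_*^{*,+}(\calD)$ as section. I would only need to add one remark: precomposing with the automorphism $\rho$ of the source $\Sigmatil$ leaves the branching points $p_1,\ldots,p_n$ and their associated partitions unchanged, so $f\compo\rho$ is again a marked realization. Since in this setting $h$ is forced to be $\id$, only the component $\htil$ of each diagram is in play, and the conjugation $\htil\mapsto\rho\compo\htil\compo\rho$ goes through verbatim, yielding well-definedness, the section property, and the exact $2:1$ count as before. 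Combining either projection with its section then gives the asserted identification of $\calR^+(\calD)$ (respectively $\calR_*^{*,+}(\calD)$) with two disjoint copies of $\calR_+^+(\calD)$ (respectively $\calR_{*,+}^{*,+}(\calD)$), indexed by whether the realization is positive or negative.
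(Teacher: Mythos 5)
Your proposal is correct and follows essentially the same route as the paper's own proof: the same map $[f]\mapsto[f]$ or $[f\compo\rho]$ built from a fixed negative involution $\rho$, the same two-sided conjugation $\htil\mapsto\rho\compo\htil\compo\rho$, and the same key observation that a positive equivalence preserves the sign of a realization, which both rules out mixed cases and shows that $[g]$ and $[g\compo\rho]$ are distinct in $\calR^+(\calD)$, giving the exact $2:1$ count. One small presentational caveat: in your well-definedness paragraph the claim that all four cases of Proposition~\ref{j:bijection} go through by conjugation is misleading, since the one-sided compositions $\rho\compo\htil$ and $\htil\compo\rho$ arising in the mixed cases are negative; those cases are in fact vacuous here (exactly by the sign-preservation fact you state later), which is how the paper disposes of them.
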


\begin{proof}
As in the proof of Proposition~\ref{j:bijection} we fix a negative involutive automorphism $\rho$ of $\Sigmatil$.
We spell out the proof for $j^+$; the version for $j_+^{*,+}$ is identical.
For $f\in R(\calD)$ we set
$$\varphi([f])=\begin{cases}
             [f] & \text{ if }f\in R_+(\calD) \\
             [f\compo\rho] & \text{ if }f\not\in R_+(\calD)
             \end{cases}$$
and we have the following:

\medskip\noindent\textsc{Fact: $\varphi:\calR^+(\calD)\to\calR_+^+(\calD)$ is well-defined}\quad
For $f\not\in R_+(\calD)$ we have $f\compo\rho\in R_+(\calD)$, so it
makes sense to take $[f\compo\rho]\in\calR_+^+(\calD)$.
If $f,f'\in R(\calD)$ are equivalent in $\calR^+(\calD)$ then
there exists a commutative diagram as~(\ref{equiv:diag}) with $h$ and $\htil$ positive.
It follows that either $f,f'\in R_+(\calD)$ or $f,f'\not\in R_+(\calD)$.
In the former case the same diagram shows that $f$ and $f'$ are equivalent
in $\calR_+^+(\calD)$. In the latter case a commutative diagram as~(\ref{rhohtilrho:diag})
shows that $f\compo\rho$ and $f'\compo\rho$ are equivalent
in $\calR_+^+(\calD)$.

\medskip\noindent\textsc{Fact: $\varphi$ is $2:1$}\quad
For $g\in R_+(\calD)$ we have that $\varphi^{-1}([g])$ certainly contains
$[g]$ and $[g\compo\rho]$. Conversely, suppose that $\varphi([f])=[g]$.
We have two cases. Either $f\in R_+(\calD)$, whence $\varphi([f])=[f]$,
which implies that $[f]=[g]$ in $\calR_+^+(\calD)$ and hence in particular
in $\calR^+(\calD)$. Or $f\not\in R_+(\calD)$, so $\varphi([f])=[f\compo\rho]$,
whence $[f\compo\rho]=[g]$ in $\calR_+^+(\calD)$; this implies that
we have a commutative diagram
\begin{center}
\begin{tikzcd}
\Sigmatil \arrow{r}{\htil} \arrow{d}{f\compo \rho} & \Sigmatil \arrow{d}{g}\\
 \Sigma \arrow{r}{h} & \Sigma
 \end{tikzcd}
\end{center}
whence
\begin{center}
\begin{tikzcd}
\Sigmatil \arrow{rr}{\rho\compo\htil\compo\rho} \arrow{dd}{f} && \Sigmatil \arrow{dd}{g\compo\rho}\\
& &\\
 \Sigma \arrow{rr}{h} && \Sigma,
\end{tikzcd}
\end{center}
therefore $[f]=[g\compo\rho]$ in $\calR^+(\calD)$. Now we note that $g$ and $g\compo\rho$
cannot be equivalent in $\calR^+(\calD)$ and the claimed fact is proved.

\medskip\noindent\textsc{Fact: $j^+$ is a section of $\varphi$}\quad
This is immediate, because for $g\in R_+(\calD)$ we have $j^+([g])=[g]$ and
$\varphi([g])=[g]$.
\end{proof}

The results proved so far imply that of the $12$ potentially
distinct $\calR_{m,s}^{\mu,\sigma}(\calD)$ only three remain to understand, because
$$\begin{array}{lclclcl}
\calR(\calD)             & = & \calR_+(\calD)                       & = & \calR_{*,+}(\calD)      & = & \calR_*(\calD) \\
\calR_+^+(\calD)         & = & \calR_{*,+}^+(\calD)                 &   &                         &   &                \\
\calR_{*,+}^{*,+}(\calD) & = & \calR_{*,+}^*(\calD)                 & = & \calR_*^*(\calD)        &   &                \\
\calR_*^+(\calD)         & = & \calR^+(\calD)                       & = & 2\times\calR_+^+(\calD) &   &                \\
\calR_*^{*,+}(\calD)     & = & 2\times\calR_{*,+}^{*,+}(\calD).     &   &                         &   &
\end{array}$$
Note that each $\calR_{m,s}^{\mu,\sigma}(\calD)$ is a finite set (see also Section~\ref{dessin:sec})
and does not carry any
significant algebraic structure, so only its cardinality actually matters. In the
rest of the paper we will concentrate on
$$\calR_{*,+}^{*,+}(\calD)\qquad\calR_+^+(\calD)\qquad \calR(\calD)$$
and prove that they can indeed differ from each other. Note that by construction
$$\#(\calR_{*,+}^{*,+}(\calD))\geqslant\#(\calR_+^+(\calD))\geqslant\#(\calR(\calD))$$
and they can only vanish simultaneously.

\medskip

We remark here that
our $\calR_{*,+}^{*,+}(\calD)$ is called in~\cite{LZ} the set of \emph{rigid equivalence classes}
of branched covers matching $\calD$, while $\calR_+^+(\calD)$ is called the set of \emph{flexible
equivalence classes}. From this viewpoint, we could call $\calR(\calD)$ the set of
\emph{very flexible equivalence classes}.

\section{Dessins d'enfant}\label{dessin:sec}
Graphs on surfaces have been used for a long time to understand
surface branched covers, see~\cite{LZ}; their version for the case where
$\Sigma$ is the sphere $S$ and the number $n$ of branching point is $3$
has been popularized by Grothendieck under the name of \emph{dessins d'enfant},
and pushed to its most ultimate algebraic consequences, see~\cite{Groth} and~\cite{Cohen}.
In this paper we will adopt a purely topological viewpoint, neglecting the algebraic one
altogether.

\medskip

Let us call \emph{graph} a cellular $1$-complex $\Gamma$, consisting of
vertices and edges (we do not insist that $\Gamma$ should be simplicial,
so loops and multiple edges are allowed). We say that $\Gamma$ is
\emph{bipartite} if its vertices are coloured black and white, and every edge
joins black to white.  We then call \emph{dessin d'enfant} on $\Sigmatil$
a bipartite graph $\Gamma$ embedded in $\Sigmatil$ so that the complement of
$\Gamma$ consists of open discs, called \emph{regions}. The \emph{length} of a region
of $\Gamma$ is now  the number of black (or white) vertices it is incident to
(with multiplicity).

\medskip

In the rest of this section we fix a branch datum $\calD$ with base surface $\Sigma$
the sphere $S$ and $n=3$ branching points, so
$$\calD=\big(\Sigmatil,S,d,3,\pi_1,\pi_2,\pi_3\big).$$
The results we prove here could be extracted from the literature~\cite{LZ},
but we spell them out for the sake of completeness. We will denote by
$G(\calD)$ the set of dessins d'enfant $\Gamma$ on $\Sigmatil$ such that the valences
of the black vertices of $\Gamma$ give the partition $\pi_1$ of $d$, the valences
of the white vertices give $\pi_2$, while the lengths of the regions of $\Gamma$ give $\pi_3$.
We define the quotient $\calG_*(\calD)$ of $G(\calD)$ under the action of the positive automorphisms of $\Sigmatil$.

\begin{prop}\label{Rstarstar:prop}
There is a natural bijection between
$\calR_{*,+}^{*,+}(\calD)$ and
$\calG_*(\calD)$.
\end{prop}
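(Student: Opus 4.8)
The plan is to construct a bijection between $\calR_{*,+}^{*,+}(\calD)$ and $\calG_*(\calD)$ by exploiting the classical dictionary between branched covers of the sphere with three branching points and bipartite graphs on the covering surface. The base sphere $S=\Sigma$ with three marked branching points $p_1,p_2,p_3$ admits a standard decomposition: take an arc $\alpha$ in $S$ joining $p_1$ to $p_2$ and avoiding $p_3$, colour $p_1$ black and $p_2$ white. The complement $S\setminus\alpha$ is an open disc containing $p_3$. Given a marked positive realization $f\in R_{*,+}(\calD)$, I would set $\Gamma_f=f^{-1}(\alpha)\subset\Sigmatil$, a bipartite graph whose black vertices are the preimages of $p_1$, whose white vertices are the preimages of $p_2$, and whose edges are the components of $f^{-1}(\alpha)$. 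Because $f$ is a branched cover with branching only over $p_1,p_2,p_3$, the preimage of the complementary disc is a union of open discs (the regions of $\Gamma_f$), so $\Gamma_f$ is genuinely a dessin d'enfant on $\Sigmatil$.

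Next I would check that $\Gamma_f$ lands in $G(\calD)$. The local degree of $f$ at a preimage of $p_1$ equals the number of edges of $\Gamma_f$ incident to that black vertex (its valence), so the valences of black vertices give exactly $\pi_1$; symmetrically the white valences give $\pi_2$. Each region of $\Gamma_f$ maps onto the complementary disc as a branched cover branched only over $p_3$, so the local degree at the preimage of $p_3$ inside that region equals the length of the region as defined in the excerpt; hence the region lengths give $\pi_3$. This shows $\Gamma_f\in G(\calD)$. Passing to $\calG_*(\calD)$, I would verify that replacing $f$ by $\psi\compo f$ for a positive automorphism $\psi$ of $\Sigmatil$ replaces $\Gamma_f$ by $\psi(\Gamma_f)$, so the assignment descends to a well-defined map $\calR_{*,+}^{*,+}(\calD)\to\calG_*(\calD)$; here I use crucially that in $\calR_{*,+}^{*,+}$ the base homeomorphism $h$ is the identity (Proposition~\ref{qstplusst:prop} and the definition of the superscript $*$), so the equivalence on covers is precisely postcomposition by positive automorphisms of $\Sigmatil$, matching the quotient defining $\calG_*(\calD)$.

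For the inverse, given a dessin $\Gamma\in G(\calD)$ I would reconstruct the cover by mapping each edge of $\Gamma$ homeomorphically onto $\alpha$ (black endpoint to $p_1$, white to $p_2$) and each region onto the complementary disc by a model branched cover of the appropriate degree branched over $p_3$. The combinatorial data of $\Gamma$ determine how these pieces glue, producing a positive branched cover $f_\Gamma:\Sigmatil\to S$ with $f_\Gamma^{-1}(\alpha)=\Gamma$ and branch datum $\calD$; since the branching points are $p_1,p_2,p_3$ by construction, $f_\Gamma\in R_{*,+}(\calD)$. The two constructions are mutually inverse up to the respective equivalences, giving the bijection. The main obstacle I anticipate is the well-definedness of the inverse: I must confirm that the model cover on each region is unique up to positive automorphism fixing the boundary (so that the gluing is canonical), and that two dessins equivalent under a positive automorphism of $\Sigmatil$ yield covers equivalent in $\calR_{*,+}^{*,+}(\calD)$ rather than merely in some coarser quotient. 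This uniqueness of the disc branched cover branched once over an interior point, together with the rigidity forced by the identity base map, is what makes the correspondence a clean bijection rather than a surjection with fibers.
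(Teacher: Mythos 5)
Your proposal is correct and takes essentially the same route as the paper: pull back an arc joining $p_1$ to $p_2$ to obtain the dessin, check that valences and region lengths realize $\pi_1,\pi_2,\pi_3$, and reconstruct the cover from a dessin by mapping edges onto the arc and extending over each complementary region with a single branching point sent to $p_3$, the two constructions being mutually inverse up to the matching equivalences. The only quibble is the ill-typed expression $\psi\compo f$ (with $\psi$ an automorphism of $\Sigmatil$ you must precompose, i.e.\ pass from $f$ to $f\compo\psi^{-1}$, which carries $\Gamma_f$ to $\psi(\Gamma_f)$); this is exactly the equivalence with $h=\id$ that defines $\calR_{*,+}^{*,+}(\calD)$, so the argument stands as you intended.
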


\begin{proof}
Take $f\in R_{*,+}(\calD)$ and recall that the branching points of $f$ are the fixed points
$p_1,p_2,p_3$, with associated partitions $\pi_1,\pi_2,\pi_3$ of $d$, in this order. Now
choose in $S$ a simple arc $\alpha$ joining $p_1$ to $p_2$, paint $p_1$ black and $\pi_2$ white,
and define $\Gamma=f^{-1}(\alpha)$, with vertex colours lifted through $f$. Since $\alpha$ is
unique up to isotopy fixed on $p_1$ and $p_2$, the dessin d'enfant
$\Gamma$ is well-defined up to positive automorphisms of $\Sigmatil$.
Moreover, by the definition of the equivalence relation giving $\calR_{*,+}^{*,+}(\calD)$, we see that
$[\Gamma]\in\calG_*(\calD)$ only depends on $[f]\in\calR_{*,+}^{*,+}(\calD)$.

\medskip

Conversely, given $\Gamma\in G(\calD)$, we can choose in $S$ a simple arc $\alpha$ joining $p_1$ to $p_2$,
and map $\Gamma$ continuously to $\alpha$ sending the black vertices to $p_1$, the white vertices to $p_2$,
and each edge bijectively onto $\alpha$. We can now extend this map continuously
to each complementary region $\Omega$ of $\Gamma$,
in such a way that $\Omega$ is mapped onto $S$ and there is only one branching point
in the interior of $\Omega$, mapped to $p_3$ with local degree equal to the length of $\Omega$.
Patching these extensions together we get a branched cover $f$ in $R_{*,+}(\calD)$, and it is a routine
matter to check that $[f]\in\calR_{*,+}^{*,+}(\calD)$ only depends on
$[\Gamma]\in\calG_*(\calD)$.

\medskip

The two maps $[f]\mapsto[\Gamma]$ and $[\Gamma]\mapsto[f]$ are the inverse of each other, whence the conclusion.
\end{proof}

We next define a further quotient $\calG_+(\calD)$ where two elements of $\calG_*(\calD)$ are
identified if two of their representatives can be obtained from each other by a combination of the
following moves $\Gamma\mapsto\Gamma'$:
\begin{itemize}
\item $\Gamma'$ equals $\Gamma$ as a graph, but the black and white colours of the vertices are switched;
\item $\Gamma'$ has the same black vertices as $\Gamma$, one white vertex in the interior of each
complementary region $\Omega$ of $\Gamma$, and disjoint edges contained in $\Omega$ joining this white vertex to
the black vertices adjacent to $\Omega$.
\end{itemize}
Note that a single move $\Gamma\mapsto\Gamma'$ might lead from a dessin in $G(\calD)$ to one outside
$G(\calD)$, but then a further move $\Gamma\mapsto\Gamma'$ might lead back to $G(\calD)$.
Note also that if $\pi_1,\pi_2,\pi_3$ are distinct then $\calG_+(\calD)=\calG_*(\calD)$.

\begin{prop}
There is a natural bijection between
$\calR_+^+(\calD)$ and $\calG_+(\calD)$.
\end{prop}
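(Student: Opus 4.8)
The plan is to build the bijection directly from the one established in the previous proposition, descending it to a further quotient. Write $\Phi\colon\calR_{*,+}^{*,+}(\calD)\to\calG_*(\calD)$ for that bijection. By the equalities already proved we have $\calR_+^+(\calD)=\calR_{*,+}^+(\calD)$, and the quotient map $c_+^+\colon\calR_{*,+}^{*,+}(\calD)\twoheadrightarrow\calR_{*,+}^+(\calD)$ exhibits $\calR_+^+(\calD)$ as the quotient of $\calR_{*,+}^{*,+}(\calD)$ obtained by relaxing the condition $h=\id$ in diagram~(\ref{equiv:diag}) to ``$h$ an arbitrary positive homeomorphism of $S$''. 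Thus I would first record that two marked positive realizations $f,f'$ become identified in $\calR_+^+(\calD)$ precisely when $h\compo f=f'\compo\htil$ for some positive homeomorphisms $h$ of $S$ and $\htil$ of $\Sigmatil$, and that, since $\calR_{*,+}^{*,+}(\calD)$ already quotients by positive $\htil$, this amounts to comparing $[f]$ with $[h\compo f]$.

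Next I would analyse which homeomorphisms $h$ are relevant. Such an $h$ must send the branching set $\{p_1,p_2,p_3\}$ of $f$ to that of $f'$ respecting the partitions, hence it induces a permutation $\sigma$ of the three indices; composing $f$ with $h$ yields a realization branched over $p_1,p_2,p_3$ but now carrying the partitions in the order $\pi_{\sigma(1)},\pi_{\sigma(2)},\pi_{\sigma(3)}$, which lies in $R_{*,+}(\calD)$ exactly when $\pi_{\sigma(j)}=\pi_j$ for all $j$. This is the precise analogue of the phenomenon, noted before the statement, that a single move may carry a dessin out of $G(\calD)$. Conversely every permutation of three points of the sphere is realized by a positive self-homeomorphism of $S$, so every $\sigma$ genuinely occurs, and the relation cutting out $\calR_+^+(\calD)$ is generated by the operations $[f]\mapsto[h\compo f]$ with $h$ realizing a transposition of two of the points.

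The heart of the proof is to match these two generating transpositions with the two moves, through $\Phi$. Recall $\Phi([f])$ is the dessin $f^{-1}(\alpha)$ for an arc $\alpha$ from $p_1$ to $p_2$, with $f^{-1}(p_1)$ black and $f^{-1}(p_2)$ white. If $h$ transposes $p_1,p_2$ and fixes $p_3$, the dessin of $h\compo f$ is $f^{-1}(h^{-1}(\alpha))$, the same graph with the two colours interchanged: this is the first move. If $h$ transposes $p_2,p_3$ and fixes $p_1$, then $h^{-1}(\alpha)$ is an arc from $p_1$ to $p_3$, so the dessin of $h\compo f$ keeps the black vertices $f^{-1}(p_1)$, has a white vertex (a preimage of $p_3$) in each region of $\Phi([f])$, and joins it to the surrounding black vertices: this is the second move. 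Here I would carry out the ribbon-graph bookkeeping confirming that the black valences remain $\pi_1$, that the new white vertices acquire valences $\pi_3$ (the lengths of the old regions), and that the new regions have lengths $\pi_2$ (around the old white vertices); this is what certifies that the second move really implements the change of arc from $p_1p_2$ to $p_1p_3$.

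Finally, since $(1\,2)$ and $(2\,3)$ generate the full symmetric group on $\{p_1,p_2,p_3\}$, the equivalence relation on $\calR_{*,+}^{*,+}(\calD)$ produced by arbitrary positive $h$ corresponds under $\Phi$ exactly to the relation on $\calG_*(\calD)$ generated by the two moves, with a sequence of moves returning a dessin of $G(\calD)$ to $G(\calD)$ precisely when the net permutation satisfies $\pi_{\sigma(j)}=\pi_j$, the same condition that makes $h\compo f$ again marked. Therefore $\Phi$ descends to a bijection between the quotients $\calR_+^+(\calD)$ and $\calG_+(\calD)$, which is manifestly natural. I expect the one genuinely delicate step to be the identification of the second move with the transposition of $p_2$ and $p_3$, i.e.\ the region--vertex duality described above; the colour-swap case and the group-theoretic bookkeeping should be routine.
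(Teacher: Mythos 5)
Your proposal is correct and takes essentially the same approach as the paper: both descend the bijection $\Phi\colon\calR_{*,+}^{*,+}(\calD)\to\calG_*(\calD)$ through the quotient $c_+^+$ (using the identification $\calR_+^+(\calD)=\calR_{*,+}^+(\calD)$ via $o_+^+$), and both match the extra identifications coming from positive homeomorphisms of $S$ permuting $p_1,p_2,p_3$ with the two moves defining $\calG_+(\calD)$, including the subtlety that intermediate stages may leave $G(\calD)$ just as intermediate compositions may fail to be marked. Your explicit $S_3$/transposition bookkeeping is simply a group-theoretic rephrasing of the paper's formulation in terms of arcs, where the two arc moves (switching the colours of the ends, and replacing the arc $p_1p_2$ by $p_1p_3$) correspond exactly to your transpositions $(1\,2)$ and $(2\,3)$ and induce the two moves on dessins.
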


\begin{proof}
Recall that we have a natural surjection $c_+^+:\calR_{*,+}^{*,+}(\calD)\to\calR_{*,+}^+(\calD)$
and that $\calR_+^+(\calD)$ is identified to $\calR_{*,+}^+(\calD)$ via $o_+^+$.
Take $f,f'\in R_{*,+}(\calD)$ and suppose that $c_+^+([f])=c_+^+([f'])$. Then there exists
a diagram as~(\ref{equiv:diag}) with positive $h,\htil$. Choose in $S$ arcs
$\alpha$ and $\alpha'$ joining $p_1$ to $p_2$, with a black end at $p_1$ and a white end at $p_2$,
so the equivalence classes in $\calG_*(\calD)$ of $\Gamma=f^{-1}(\alpha)$ and $\Gamma'=f'^{-1}(\alpha')$
correspond to the elements $[f]$ and $[f']$ of $\calR_{*,+}^{*,+}(\calD)$. Now $h^{-1}(\alpha')$
need not be isotopic to $\alpha$ with isotopy fixed at the ends, because it will have its black
end at some $p_{i_1}$ and its white end at some $p_{i_2}$ (but note that we must have $\pi_{i_1}=\pi_1$
and $\pi_{i_2}=\pi_2$). However $\alpha'$ becomes $\alpha$, up to isotopy
fixed at the ends, by a combination of the following
moves applied to an arc:
\begin{itemize}
\item Switch the colours of the ends of the arc;
\item Supposing the black end is at $p_1$ and the white end is at $p_2$,
replace the arc by one with black end at $p_1$ and white end at $p_3$.
\end{itemize}
This implies that $\htil^{-1}(\Gamma')$ becomes $\Gamma$ by a combination of the moves defining
the equivalence in $\calG_+(\calD)$, so $[\Gamma]=[\Gamma']$ in $\calG_+(\calD)$.
The same construction shows that if $[\Gamma]=[\Gamma']$ in $\calG_+(\calD)$ then
$c_+^+([f])=c_+^+([f'])$, whence the conclusion.
\end{proof}

As a final step, we take the quotient $\calG(\calD)$ of $\calG_+(\calD)$ by allowing
the action of the negative automorphisms of $\Sigmatil$ as well. Then we easily have:

\begin{prop}
There is a natural bijection between
$\calR(\calD)$ and $\calG(\calD)$.
\end{prop}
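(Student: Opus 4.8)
The plan is to mirror the proof of the previous proposition, now allowing orientation-reversing maps on both sides. Recall from the displayed chain of equalities that $\calR(\calD)=\calR_{*,+}(\calD)$, and that $\calR_+^+(\calD)$ is identified with $\calR_{*,+}^+(\calD)\cong\calG_+(\calD)$; moreover the quotient map $q_{*,+}:\calR_{*,+}^+(\calD)\twoheadrightarrow\calR_{*,+}(\calD)$ simply drops the requirement that the homeomorphisms $h,\htil$ in diagram~(\ref{equiv:diag}) be positive. So it suffices to show that, under the bijection $\calR_{*,+}^+(\calD)\cong\calG_+(\calD)$, two classes become identified in $\calR_{*,+}(\calD)$ exactly when the corresponding dessins become identified in $\calG(\calD)$.

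First I would take $f,f'\in R_{*,+}(\calD)$ equivalent in $\calR_{*,+}(\calD)$, so that there is a diagram as~(\ref{equiv:diag}) with $h,\htil$ homeomorphisms. Since $f,f'$ are marked, comparing the branching loci of $h\compo f$ and $f'\compo\htil$ forces $h$ to permute $\{p_1,p_2,p_3\}$ respecting the partitions, and since $f,f'$ are positive the maps $h$ and $\htil$ must have the same sign. If both are positive we are in the situation of the previous proposition and $[\Gamma]=[\Gamma']$ already in $\calG_+(\calD)$, hence in $\calG(\calD)$. If both are negative, I choose arcs $\alpha,\alpha'$ and set $\Gamma=f^{-1}(\alpha)$, $\Gamma'=f'^{-1}(\alpha')$ as in Proposition~\ref{Rstarstar:prop}; writing $f=h^{-1}\compo f'\compo\htil$ gives $\Gamma=\htil^{-1}\big(f'^{-1}(h(\alpha))\big)$. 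The arc $h(\alpha)$ joins two (possibly permuted) marked points, so $f'^{-1}(h(\alpha))$ differs from $\Gamma'$ only by the moves defining $\calG_+(\calD)$, exactly as in the previous proof, while $\htil^{-1}$ is a negative automorphism of $\Sigmatil$. Therefore $\Gamma$ is obtained from $\Gamma'$ by a combination of moves and a negative automorphism, whence $[\Gamma]=[\Gamma']$ in $\calG(\calD)$.

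For the converse I would check that each move and each negative automorphism of $\Sigmatil$ is realized by an equivalence in $\calR_{*,+}(\calD)$. The moves are handled by the previous proposition. Given a negative automorphism $\psi$ of $\Sigmatil$ and a dessin $\Gamma=f^{-1}(\alpha)$, I pick a negative homeomorphism $\bar\rho$ of $S$ permuting $\{p_1,p_2,p_3\}$ and set $f'=\bar\rho\compo f\compo\psi^{-1}$; a sign count shows $f'\in R_{*,+}(\calD)$, and choosing the arc $\bar\rho(\alpha)$ one computes $f'^{-1}(\bar\rho(\alpha))=\psi(\Gamma)$. The commutative square with top arrow $\psi$ and bottom arrow $\bar\rho$ then exhibits $f$ and $f'$ as equivalent in $\calR_{*,+}(\calD)$, so $[\Gamma]$ and $[\psi(\Gamma)]$ map to a single class. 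Combining the two directions gives the desired bijection $\calR(\calD)=\calR_{*,+}(\calD)\cong\calG(\calD)$.

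The main obstacle is the orientation bookkeeping in the converse: a negative automorphism of $\Sigmatil$ alone sends a positive cover to a negative one, so to stay inside $R_{*,+}(\calD)$ it must be paired with a negative homeomorphism of the base, and it is precisely this type of orientation-reversing pair that $\calR_{*,+}(\calD)$ permits but $\calR_{*,+}^+(\calD)$ forbids. Keeping track of how the two negative maps interact with the arc-moves is the only delicate point; everything else is routine and parallels the earlier arguments.
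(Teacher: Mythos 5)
Your proof is correct and is precisely the routine extension the paper has in mind: the paper actually omits the proof altogether (introducing the proposition with ``Then we easily have''), and your argument---descending the bijection $\calR_{*,+}^{*,+}(\calD)\cong\calG_*(\calD)$ through the further quotients, with negative automorphisms of $\Sigmatil$ on the dessin side matching sign-free pairs $(h,\htil)$ on the cover side---fills in exactly the intended details. One small fix in the converse direction: a negative homeomorphism $\bar\rho$ of $S$ that merely \emph{permutes} $\{p_1,p_2,p_3\}$ need not make $f'=\bar\rho\compo f\compo\psi^{-1}$ a \emph{marked} realization (the sign count gives positivity but not the marking), so you should take $\bar\rho$ fixing each $p_j$ --- such a negative involution exists, e.g.\ the reflection in a circle through the three points --- or at least require the induced permutation to respect the partitions.
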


\section{Precisely three methods are distinct}
By the results of the previous section, all we must show to conclude that
$$\calR_{*,+}^{*,+}(\calD)\qquad\calR_+^+(\calD)\qquad \calR(\calD)$$
are distinct in some cases, is to show that each of the natural projections
$$\calG_*(\calD)\twoheadrightarrow\calG_+(\calD)\qquad
\calG_+(\calD)\twoheadrightarrow\calG(\calD)$$
can fail to be a bijection. Note first that given
$$\calD=\big(\Sigmatil,S,d,3,\pi_1,\pi_2,\pi_3\big)$$
there are finitely many abstract bipartite graphs with valences of
the black vertices the entries of $\pi_1$ and valences of the white vertices
the entries of $\pi_2$. Each of these graphs embeds in $\Sigmatil$ in only a
finite number of ways up to automorphisms of $\Sigmatil$, so
$\calG_*(\calD)$ is always a finite set. Even if we will not need this here,
we note that variants of the argument based on dessins d'enfant prove
that $\calR_{*,+}^{*,+}(\calD)$ is always a finite set, so
$\calR_+^+(\calD)$ and $\calR(\calD)$ also are.

\medskip

Let us now consider the branch datum
$$\calD=\big(S,S,7,3,[3,2,1,1],[3,2,1,1],[7]\big).$$
An easy enumeration argument proves that $\calG_*(\calD)$ has
$9$ elements, with representatives $\Gamma_1,\ldots,\Gamma_9$ shown
in Fig.~\ref{example:fig}.
 \begin{figure}[ht]
 \begin{center}
  \includegraphics[scale=1.2]{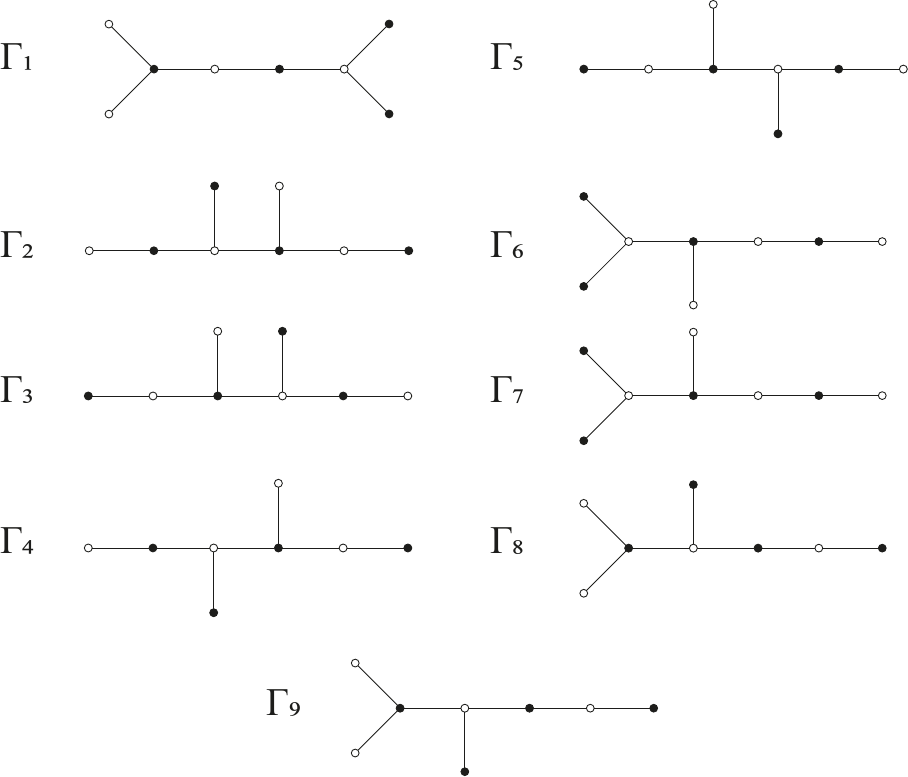}\caption{Representatives of $\calG_*(S,S,7,3,[3,2,1,1],[3,2,1,1],[7])$.}\label{example:fig}
  \end{center}
 \end{figure}

\medskip
Now one sees that the graphs $\Gamma_2$ and $\Gamma_3$ are
obtained from each other by a switch of colours, and the same happens for
$\Gamma_6$ and $\Gamma_9$, and for $\Gamma_7$ and $\Gamma_8$. Note
that only the switch of colours is relevant to
understand the projection $\calG_*(\calD)\to\calG_+(\calD)$, because
$\pi_1=\pi_2\neq \pi_3$,
so we conclude that $\calG_+(\calD)$ has $6$ elements.
We must now analyze what equivalence classes in $\calG_+(\calD)$ are
related by a reflection in $S$, and we see that it happens precisely for
those of $\Gamma_4$ and $\Gamma_5$, and for those of $\Gamma_6$ and $\Gamma_7$.
This implies that $\calG(\calD)$ has $4$ elements.  Therefore
\begin{eqnarray*}
&   & \#\big(\calR_{*,+}^{*,+}\big(S,S,7,3,[3,2,1,1],[3,2,1,1],[7]\big)\big)=9\\
& > & \#\big(\calR_+^+\big(S,S,7,3,[3,2,1,1],[3,2,1,1],[7]\big)\big)=6\\
& > & \#\big(\calR\big(S,S,7,3,[3,2,1,1],[3,2,1,1],[7]\big)\big)=4.
\end{eqnarray*}
Without providing the details we also note that in the relation
$$\#(\calR_{*,+}^{*,+}(\calD))\geqslant\#(\calR_+^+(\calD))\geqslant\#(\calR(\calD))$$
all the possibilities for the $\geqslant$ relations occur in reality; for instance
\begin{eqnarray*}
&   & \#\big(\calR_{*,+}^{*,+}\big(S,S,7,3,[7],[4,1,1,1],[3,2,1,1]\big)\big)\\
& = & \#\big(\calR_+^+\big(S,S,7,3,[7],[4,1,1,1],[3,2,1,1]\big)\big)=3\\
& > & \#\big(\calR\big(S,S,7,3,[7],[4,1,1,1],[3,2,1,1]\big)\big)=2,
\end{eqnarray*}
\begin{eqnarray*}
&   & \#\big(\calR_{*,+}^{*,+}\big(S,S,7,3,[3,3,1],[3,3,1],[4,2,1]\big)\big)=4\\
& > & \#\big(\calR_+^+\big(S,S,7,3,[3,3,1],[3,3,1],[4,2,1]\big)\big)\\
& = & \#\big(\calR\big(S,S,7,3,[3,3,1],[3,3,1],[4,2,1]\big)\big)=2,
\end{eqnarray*}
\begin{eqnarray*}
&   & \#\big(\calR_{*,+}^{*,+}\big(S,S,8,3,[4,2,2],[2,2,1,1,1,1],[8]\big)\big)\\
& = & \#\big(\calR_+^+\big(S,S,8,3,[4,2,2],[2,2,1,1,1,1],[8]\big)\big)\\
& = & \#\big(\calR\big(S,S,8,3,[4,2,2],[2,2,1,1,1,1],[8]\big)\big)=3.
\end{eqnarray*}

To conclude we note that the Hurwitz numbers computed in~\cite{Medn1, Medn2} are
$\#(\calR_{*,+}^{*,+}(\calD))$, while those computed in~\cite{x1, x3}
are $\#(\calR(\calD))$.

\end{document}